\pgfplotsset{compat=1.14} 
\tikzstyle{vertex} = [fill,shape=circle,node distance=30pt]
\tikzstyle{edge} = [fill,opacity=.6,fill opacity=.5,line cap=round, line join=round, line width=10pt]
\tikzstyle{elabel} =  [fill,shape=circle,node distance=30pt,fill opacity=.9]
\definecolor{mygray}{gray}{0.95}
\definecolor{mypurple}{rgb}{0.59, 0.44, 0.84}
\pgfplotsset{compat=1.16}
\newtheorem{thm}{Theorem}
\newtheorem{remark}{Remark}
\newtheorem{defn}{Definition}
\newcommand{\R}{\mathbb{R}}
\newcommand{\Z}{\mathbb{Z}}
\newcommand{\g}{\mathcal{G}}
\newcommand{\Vs}{\mathcal{V}}
\newcommand{\Es}{\mathcal{E}}
\newcommand{\xv}{\mathbf{x}}
\newcommand{\uv}{\mathbf{u}}
\newcommand{\Av}{\mathbf{A}}
\newcommand{\Cv}{\mathbf{C}}
\newcommand{\Bv}{\mathbf{B}}
\newcommand{\Dv}{\mathbf{D}}
\newcommand{\Jv}{\mathbf{J}}
\newcommand{\tA}{\textsf{A}}
\newcommand{\tT}{\textsf{T}}
\newcommand{\yv}{\mathbf{y}}
\newcommand{\hv}{\mathbf{h}}
\newcommand{\fv}{\mathbf{f}}
\newcommand{\gv}{\mathbf{g}}
\newcommand{\Id}{\mathbf{I}}
\newcommand{\tra}{\prime}
\newcommand{\h}{\mathcal{H}}
\title{\LARGE \bf Observability of Hypergraphs}
\author{Joshua Pickard,
\thanks{J. Pickard is with the Department of  Computational Medicine \& Bioinformatics, Medical School, University of Michigan, Ann Arbor, MI 48109 USA (e-mail: jpic@umich.edu).} Amit Surana,
\thanks{A. Surana is with Raytheon Technologies Research Center, East Hartford, CT 06108 USA (e-mail: amit.surana@rtx.com).}
Anthony Bloch, \thanks{A. Bloch is with the Department of Mathematics, University of Michigan, Ann Arbor, MI 48109 USA (e-mail: abloch@umich.edu).}
and Indika Rajapakse
\thanks{I. Rajapakse is with the Department of Computational Medicine \& Bioinformatics, Medical School and the Department of Mathematics, University of Michigan, Ann Arbor, MI 48109 USA (e-mail: indikar@umich.edu).}
\thanks{All code and data associated with this paper are available at \href{https://github.com/Jpickard1/observability-of-hypergraphs}{https://github.com/Jpickard1/observability-of-hypergraphs}}
% \thanks{All code and data associated with this paper is available at \url{https://github.com/Jpickard1/observability-of-hypergraphs}}
}
\begin{document}
\maketitle
\thispagestyle{empty}
\pagestyle{empty}

%%%%%%%%%%%%%%%%%%%%%%%%%%%%%%%%%%%%%%%%%%%%%%%%%%%%%%%%%%%%%%%%%%%%%%%%%%%%%%%%
\begin{abstract}
In this paper we develop a framework to study observability for uniform hypergraphs. 
Hypergraphs, being extensions of graphs, allow edges to connect multiple nodes and unambiguously represent multi-way relationships which are ubiquitous in many real-world networks. We extend the canonical homogeneous polynomial or multilinear dynamical system on uniform hypergraphs to include linear outputs, and we derive a Kalman-rank-like condition for assessing the local weak observability. We propose an exact techniques for determining the local observability criterion, and we propose a greedy heuristic to determine the minimum set of observable nodes. Numerical experiments demonstrate our approach on several hypergraph topologies and a hypergraph representations of neural networks within the mouse hypothalamus.
\end{abstract}

%%%%%%%%%%%%%%%%%%%%%%%%%%%%%%%%%%%%%%%%%%%%%%%%%%%%%%%%%%%%%%%%%%%%%%%%%%%%%%%%
\section{INTRODUCTION}
% In this paper, we develop a framework for studying observability of complex networks represented as hypergraphs.
% The ability to monitor, predict, and control complex, networked systems is a fundamental and crucial task with widespread applications in various domains, including social/communications systems, life sciences, and security/defense, among others others.
The ability to monitor, predict, and control complex, networked systems is a fundamental and crucial task with widespread applications in various domains, including social/communications systems, life sciences, security/defense, and more \cite{strogatz2001exploring,newman2018networks,bullo2020lectures}. Networks are often represented as graphs, which while simple and to some extent universal, only represent pairwise relationships, whereas real-world phenomena can be rich in multi-way relationships. Examples include social networks with friend groups, the colocalization of chromatin strands to form transcription clusters, and brain activity where multiple regions are coregulated \cite{wolf2016advantages,battiston2020networks,benson2021higher}.
% Examples include computer networks where the dynamic relations are defined by packets exchanged over time between computers, co-authorship networks where relations are articles written by two or more authors, brain activity where multiple regions can be highly active at the same time, film actor networks, and protein-protein interaction networks \cite{battiston2020networks,benson2021higher,sweeney2021network,wolf2016advantages}.
In each case, observing the behavior and state of a few key elements within the system is informative for the global state of the system, and as opposed to graphs, hypergraphs provide a more precise representation of the system structure.

A hypergraph is a generalized form of a graph, where its hyperedges can connect any number of vertices, explicitly capturing multi-way relationships \cite{berge1984hypergraphs}. Tensors offer a natural framework for representing multi-dimensional patterns and capturing higher-order interactions \cite{kolda2009tensor}, making them increasingly relevant in the study of hypergraphs \cite{chen2021controllability,9119161,surana2022hypergraph}.

% A hypergraph is a generalization of a graph in which its hyperedges can join any number of vertices \cite{berge1984hypergraphs}, thereby unambiguously capturing multi-way relationships. An expanding body of research attests to the increased utility of hypergraph-based analyses \cite{battiston2020networks}. As tensors provide a natural framework to represent multi-dimensional patterns and capture higher-order interactions \cite{kolda2009tensor}, they are finding an increasing role in understanding hypergraphs \cite{chen2021controllability,9119161,surana2022hypergraph}.
% Observability is a property of a dynamical system that determines if it is possible to reconstruct the temporal evolution of the internal states of a system from a given set of outputs/measurements and known governing equations. For networked systems, two fundamental questions ask:

Observability in dynamical systems quantifies our capability to deduce the system's internal states from a given set of system outputs or measurements. For instance, in control engineering, especially when designing feedback control systems, we rely on estimations of the plant state based solely on the plant output or the measurements collected from its sensors. This finds various applications, such as monitoring chemical reactions network or understanding the spread of information or a disease within a community. In the context of networked systems, two fundamental questions arise:
\begin{itemize}
    \item (Q1) Is a set of sensor nodes sufficient to render a network observable?
    \item (Q2) What is the minimum set of nodes to render a network observable?
    % \item (Q2) What is the  minimum set of nodes that lead to the most accurate estimation of the network state?
\end{itemize}

Observability of network systems has been extensively studied from several perspectives; see \cite{montanari2020observability} and references therein. Structural observability involves determining Q1 based on methods such as the underlying directed graph structure; dynamic observability addresses Q2 based on classical matrix properties, particle filtering \cite{montanari2019particle}, or the observability gramian \cite{summers2014optimal}; and topological observability explores the relationship between observability and graph topologies \cite{liu2013observability,su2017analysis}. While hypergraphs are finding increasing use in representing complex networks, the problem of hypergraph observability remains unexplored.

This paper contributes to the observability of hypergraph dynamics in the following ways:
\begin{itemize}
    \item We construct a nonlinear observability test for hypergraph dynamics with linear outputs to answer Q1.
    % \item To address Q1, we derive the nonlinear observability matrix for hypergraph dynamics.
    \item We propose a greedy algorithm to efficiently determine the minimum set of observable nodes (MON), in response to Q2.
    \item We demonstrate our approach on several uniform hypergraph topologies and hypergraphs derived from an experimental mouse endomicroscopy dataset.
\end{itemize}
In this paper, we focus on the concept of weak local observability for nonlinear systems. To overcome the limitations of local observability, our proposed algorithms leverage symbolic calculations to offer a global observability test.

This paper is organized as follows. Following preliminaries in Section \ref{sec: prelims}, Section \ref{sec:obs} introduces nonlinear observability, and Section \ref{sec: uniform hypergraphs} provides an overview of uniform hypergraphs and their dynamics. In Section \ref{sec: obsv uniform hypergraphs}, a test for hypergraph observability is proposed and Section \ref{sec: MONS} provides our algorithm for selecting the MON. Finally, numerical results and a discussion are provided in Sections \ref{sec: numerical results} and \ref{sec: conclusion} respectively.

% We present a framework for determining the structural observability of hypergraphs through a linear output extension to the canonical homogeneous polynomial or multilinear tensor-based dynamical system on uniform hypergraphs. We use the notion of local weak observability as it lends to a simple algebraic test involving generic rank computation of an associated nonlinear observability matrix. We derive the nonlinear observability matrix for the hypergraph dynamics to address Q1. To account for the notion of local observability, we utilize symbolic calculations as a mechanism to provide a global observability test, and we devise a recursive technique for efficient symbolic computation. We also propose a greedy heuristic to determine MON necessary for answering Q2. We demonstrate our approach on several uniform hypergraph topologies and hypergraphs derived from an experimental mouse endomicroscopy dataset.

\section{PRELIMINARIES}\label{sec: prelims}
In this section, we present a concise review of the multi-linear algebra and Lie theory necessary for the development of a hypergraph observability criteria.

\subsection{Kronecker Product}
The Kronecker product of  $\Av\in\R^{m\times n}$ and $\mathbf{B}\in\R^{p\times q}$ is given by,
\begin{equation*}\label{eq:kronM}
\Av\otimes\Bv=\left(
                                 \begin{array}{ccc}
                                   \Av_{11}\Bv & \cdots & \Av_{1n}\Bv \\
                                   \vdots & \ddots & \vdots \\
                                    \Av_{m1}\Bv & \cdots & \Av_{mn}\Bv \\
                                 \end{array}
                               \right),
\end{equation*}
where, $\Av\otimes\Bv\in \R^{mp\times nq}$. Furthermore,  the mixed product property implies that,
\begin{equation*}\label{eq:kronprod}
(\Av\otimes\Bv)(\Cv\otimes\mathbf{D})=(\Av\Cv)\otimes(\Bv\Dv),
\end{equation*}
where $\Av,\Bv,\Cv,$ and $\Dv$ are matrices of compatible dimensions. The Kronecker power is a convenient notation to express all possible products of elements of a vector up to a given order, and it is denoted by,
\begin{equation*}\label{eq:powx}
\xv^{[i]}=\underbrace{\xv\otimes\xv\cdots\otimes\xv}_{i-\textsf{times}}.
\end{equation*}
Moreover, for $\xv\in\R^n,$ $\mbox{dim}(\xv^{[i]})= n^i$,  and each component of $\xv^{[i]}$ is of the form $\xv_1^{\omega_1}\xv^{\omega_2}\cdots \xv_n^{\omega_n} $ for some multi-index $\mathbf{\omega}\in \Z^n$  of weight $\sum_{j=1}^n\omega_j=i$.

\subsection{Tensors}
A tensor is a multidimensional array \cite{kolda2009tensor, chen2019multilinear,chen2021multilinear}. The order of a tensor is the number of its dimensions, and each dimension is called a mode. An $m$-th order real valued tensor will be denoted by $\tT\in \mathbb{R}^{J_1\times J_2\times  \dots \times J_m}$, where $J_k$ is the size of its $k$-th mode. We will denote by $\mathcal{J}=(J_1,J_2,\cdots,J_m)$. It is therefore reasonable to consider scalars $x\in\mathbb{R}$ as zero-order tensors, vectors $\xv\in\mathbb{R}^{n}$ as first-order tensors, and matrices $\mathbf{X}\in\mathbb{R}^{m\times n}$ as second-order tensors. A tensor is called \textit{cubical} if every mode is the same size, i.e., $\textsf{T}\in \mathbb{R}^{n\times n\times  \dots \times n}$. A cubical tensor $\tT$ is called \textit{supersymmetric} if $\tT_{j_1j_2\dots j_k}$ is invariant under any permutation of the indices.

\begin{defn}
The \textit{tensor vector multiplication} $\tT \times_{p} \textbf{v}$ along mode $p$ for a vector $\textbf{v}\in  \mathbb{R}^{J_p}$ is defined  by
\begin{equation*}
(\tT \times_{p} \textbf{v})_{j_1j_2\dots j_{p-1}j_{p+1}\dots j_k}=\sum_{j_p=1}^{J_p}\tT_{j_1j_2\dots j_p\dots j_k}\textbf{v}_{j_p},
\end{equation*}
which can be extended to
\begin{equation}\label{eq6}
\begin{split}
\tT\times_1 \textbf{v}_1 \times_2\textbf{v}_2\times_3\textbf{v}_3\dots \times_{k}\textbf{v}_k=\tT\textbf{v}_1\textbf{v}_2\textbf{v}_3\dots\textbf{v}_m\in\mathbb{R}
\end{split}
\end{equation}
for $\textbf{v}_p\in \mathbb{R}^{J_p}$. The expression (\ref{eq6}) is also known as the homogeneous polynomial associated with $\tT$. If $\textbf{v}_p=\textbf{v}$ for all $p$, we write  (\ref{eq6})  as $\tT\textbf{v}^{m}$ for simplicity.
\end{defn}

Tensor unfolding is considered as a critical operation in tensor computations \cite{kolda2009tensor}. In order to unfold a tensor $\tT\in\R^{J_1\times J_2\times\cdots \times J_m}$ into a vector or a matrix, we use an index mapping function $ivec(\cdot,\mathcal{J}):\mathbb{Z}\times \mathbb{Z}\times \stackrel{\scriptscriptstyle m}{\cdots} \times \mathbb{Z} \rightarrow \mathbb{Z}$ as defined in \cite{doi:10.1137/110820609}, which is given by
\begin{equation*}
ivec(\textbf{j},\mathcal{J}) = j_1+\sum_{k=2}^m(j_k-1)\prod_{l=1}^{k-1}J_l.
\end{equation*}
where, $\textbf{j}=(j_1,j_2,\cdots,j_m)$.
\begin{defn}
The $k$-\textit{mode unfolding} of $\tT$ denoted by $\tT_{(k)}$, is a $J_k \times (J_1\cdots J_{k-1}J_{k+1}\cdots J_m)$ matrix, whose $(i,p)$-th entries are given by
\begin{equation*}
\tT_{(k)}(i,p)=\tT_{j_1,\cdots,j_{k-1},i,j_{k},\cdots,j_m},
\end{equation*}
where, $\tilde{\textbf{j}}=(j_1,\cdots, j_{k-1},j_{k+1},\cdots,j_m)$ is such that $p=ivec(\tilde{\textbf{j}},\tilde{\mathcal{J})}$ with $\tilde{\mathcal{J}}=(J_1,\cdots,J_{k-1},J_{k+1}\cdots J_m)$.
\end{defn}

\subsection{Lie Derivatives}
Let $h: \R^n\rightarrow \R$ be a scalar function, then its gradient is defined as a row vector of partial derivatives,
\begin{equation*}
dh=\begin{pmatrix}
    \frac{\partial}{\partial x_1}h & \dots & \frac{\partial}{\partial x_n}h
\end{pmatrix}.
\end{equation*}
This definition can be generalized to gradient of a vector valued function $\hv: \R^n\rightarrow \R^m$ with components $\hv=(h_1,\cdots,h_m)^\prime$,   as
\begin{equation*}
\nabla_{\xv}\hv=\left(\begin{array}{c}
                  dh_1 \\
                  \vdots \\
                  dh_m
                \end{array}\right).
\end{equation*}
Let $\langle \cdot,\cdot \rangle$ be the standard inner product on $\R^n$. Let $\fv: \R^n\rightarrow \R^n$ be a vector field, then \textit{Lie derivative} of a scalar function $h$ along $\fv$ is defined as
\begin{equation*}
L_{\fv}h=\langle dh,\fv\rangle.
\end{equation*}
One can generalize this to higher order Lie derivatives $L^i_{\fv}h, i\in\Z$ defined recursively as follows,
\begin{equation*}
L^i_{\fv}h=L_{\fv}(L^{i-1}_{\fv}h),\mbox{with } L^0_{\fv}h=h.
\end{equation*}
For vector valued function $\hv$ one can similarly define the Lie derivative as
\begin{equation*}
L_{\fv}\hv=\left(\begin{array}{c}
                 L_{\fv}h_1 \\
                  \vdots \\
                 L_{\fv}h_m
                \end{array}\right)=
                \left(\begin{array}{c}
                 \langle dh_1,\fv\rangle \\
                  \vdots \\
                  \langle dh_m,\fv\rangle
                \end{array}\right).
\end{equation*}
This definition can be naturally extended to higher order  by applying the definition of higher order Lie derivatives for scalar functions to the components of $\hv$.

%%%%%%%%%%%%%%%%%%%%%%%%%%%%%%%%%%%%%%%%%%%%%%%%%%%%%%%%%%%%%%%%%%%%%%%%%%%%%%%%
\section{NONLINEAR OBSERVABILITY CRITERION}\label{sec:obs}
For nonlinear systems, notions of controllability and observability were introduced in the seminal work \cite{hermankrener}. The notion of observability is based on the indistinguishability of system states, but in contrast to the linear systems, there are several nonlinear observability concepts, such as local, weak and global observability \cite{hermankrener,sontag1984concept,gerbet2020global}. Unfortunately, unlike the linear case where the Kalman rank condition can be used to determine observability, no easy criteria exist for nonlinear systems.

Consider the affine control system $\Sigma$,
\begin{equation*}
\Sigma\begin{cases}
\dot{\xv}=\fv(\xv,\uv)=\hv_0(\xv)+\sum_{i=1}^k\hv_i(\xv)u_i\\
\yv=\gv(\xv)
\end{cases}
\end{equation*}
where, $\uv=(u_1,\cdots,u_k)^\tra\in \R^k$ denotes the input vector, $\xv\in M\subset \R^n$ is the state vector and $\yv\in\R^m$ is the output/measurement vector. We assume that $\Sigma$ is analytic, i.e., the functions $\hv_i:M\rightarrow M, i=0,\cdots,k$ and $g_i:M\rightarrow \R, i=1,\cdots,m$ where $\gv=(g_1,\cdots,g_m)^\tra$ are assumed to be analytic functions defined on $M$. We also have to assume $\Sigma$ is complete, that is, for every bounded measurable input $\uv(t)$ and every $\xv_0\in M$ there exists a solution $\xv(t)$ of $\Sigma$ such that $\xv(0) = \xv_0$ and $\xv(t)\in M$ for all $t\in \R$. We review different notions of observability from \cite{obsThesis} which are equivalent to those introduced in \cite{hermankrener}, but use a slightly different terminology.

\begin{defn}
Let $U$ be an open subset of $M$. A pair of points $\xv_0$ and $\xv_1$ in $M$ are called $U$-\textit{distinguishable } if there exists a measurable input $\uv(t)$ defined on the interval $[0,T]$ that generates solutions $\xv_0(t)$ and $\xv_1(t)$ of system $\Sigma$ satisfying $\xv_i(0)=\xv_i,i=0,1$ such that $\xv_i(t)\in U$ for $t\in[0,T]$ and $\hv(\xv_0(t))\neq \hv(\xv_1(t))$ for some $t\in [0,T]$. We denote by $I(\xv_0,U)$ all points $\xv_1\in U$ that are not $U$--\textit{distinguishable } from $\xv_0$
\end{defn}

\begin{defn}
The system $\Sigma$ is \textit{observable} at $\xv_0\in M$ if $I(\xv_0,M)=\xv_0$.
\end{defn}

\begin{defn}
The system $\Sigma$ is \textit{locally observable} at $\xv_0\in M$ if for every open neighbourhood $U$ of $\xv_0$, $I(\xv_0,U)=\xv_0$
\end{defn}

Local observability implies observability. On the other hand, since $U$ can be chosen arbitrarily small, local observability implies that we can distinguish between neighboring points instantaneously. Both the definitions above ensure that a point $\xv_0\in M$   can be distinguished from every other point in $M$. It is often sufficient to distinguish between neighbours in $M$, which leads to the following two notions of observability.

\begin{defn}
The system $\Sigma$ is \textit{weakly observable} at $\xv_0\in M$ if $\xv_0$ has an open neighbourhood $U$ such that $I(\xv_0,M)\bigcap U=\xv_0$.
\end{defn}

\begin{defn}
The system $\Sigma$ is \textit{locally weakly observable} at $\xv_0\in M$ if $\xv_0$ has an open neighbourhood $U$ such that for every open neighbourhood $V$ of $\xv_0$ contained in $U$, $I(\xv_0,V)=\xv_0$.
\end{defn}

As we can set $U=M$, local observability implies local weak observability. The local weakly observability lends itself to a simple algebraic test.  Let $\h$ be the \textit{observation space},
\begin{eqnarray*}
\h&=&\{L_{\hv_{i_1}}L_{\hv_{i_2}}\cdots L_{\hv_{i_r}}(g_{i}): r\geq 0, i_j=0,\cdots,k,\notag\\
 &&i=1,\cdots,m\}, \label{eq:defG}
\end{eqnarray*}
and
\begin{equation*}\label{eq:defdG}
d\h=\mbox{span}_{\R_x}\{d\phi:\phi \in \h\},
\end{equation*}
be the space spanned by the gradients of the elements of $\h$, where $\R_x$ is space of meromorphic functions on $M$. The following result was proved in \cite{hermankrener}, see Theorems $3.1$ and $3.11$.
\begin{thm}\label{thm1}
The analytic system $\Sigma$ is locally weakly observable for all $\xv$ in an open dense set of $M$ if and only if $dim_{\R_x}(d\h) = n$.
\end{thm}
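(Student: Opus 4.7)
The approach is the classical Hermann--Krener argument, exploiting the correspondence between output derivatives along piecewise-constant controls and iterated Lie derivatives in $\h$. I would split the biconditional into its two directions and handle sufficiency by the inverse function theorem and necessity via a Frobenius-type argument applied to the annihilator of $d\h$.

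\textbf{Sufficiency.} At any $\xv_0$ where $\dim_{\R_x}(d\h)=n$ is attained, which is an open dense subset of $M$ by upper semicontinuity of the rank of an analytic module of $1$-forms, I would select $\phi_1,\dots,\phi_n \in \h$ whose differentials are linearly independent at $\xv_0$. The inverse function theorem makes $\Phi = (\phi_1,\dots,\phi_n)^\tra$ a local analytic diffeomorphism on some neighborhood $U$ of $\xv_0$ and, in particular, $\Phi$ separates points of $U$. It remains to connect this static separation to dynamic distinguishability: if $\xv_0$ and some $\xv_1 \in V\subseteq U$ produce identical outputs under every piecewise-constant input on a sufficiently short interval, I would use switching sequences with constant values $\uv^{(j)}$ on intervals of length $t_j$ and, by Taylor-expanding the output in the $t_j$ and then polarizing in the $u_i^{(j)}$, recover every iterated Lie derivative $L_{\hv_{i_1}}\cdots L_{\hv_{i_r}}g_i$ at $\xv_0$ and $\xv_1$. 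Equality of these for all multi-indices forces $\phi_j(\xv_0) = \phi_j(\xv_1)$ for each chosen $\phi_j$, and injectivity of $\Phi$ then gives $\xv_0 = \xv_1$.

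\textbf{Necessity.} Contrapositively, suppose $\dim_{\R_x}(d\h)<n$. Then the pointwise rank of $d\h$ is strictly less than $n$ everywhere, and on an open dense subset $W\subseteq M$ where this rank is locally constant there exists an analytic nonvanishing vector field $\fv$ with $L_{\fv}\phi=0$ for every $\phi\in\h$. Flowing $\fv$ from any $\xv_0\in W$ produces, for small $t>0$, a distinct point $\xv_1 = \Phi^{\fv}_t(\xv_0)$ at which every $\phi\in\h$ takes the same value as at $\xv_0$; in particular all iterated Lie derivatives of the output agree. By the same switching-sequence identity used above, $\xv_0$ and $\xv_1$ yield matching output Taylor expansions under every piecewise-constant input and hence, by analyticity and density, identical output signals under every admissible input, contradicting local weak observability at $\xv_0$.

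\textbf{Main obstacle.} The genuine technical hurdle is the bridge between $\h$ and actual output trajectories: I must verify that equality of every iterated Lie derivative in $\h$ at two points translates into identical measured outputs for \emph{every} measurable input, not only for piecewise-constant controls. Analyticity of the $\hv_i$ and $\gv$ is what makes this bridge work, since it renders the output an analytic function of the switching times so that agreement of all Taylor coefficients implies agreement on an interval; density of piecewise-constant controls in the space of bounded measurable controls then upgrades the conclusion to arbitrary admissible inputs. The remaining steps---upper semicontinuity of rank and existence of an analytic $\fv$ in the annihilator of $d\h$---follow from standard facts about analytic differential forms.
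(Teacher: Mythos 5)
The paper does not prove this statement at all: it is quoted verbatim from Hermann and Krener (the text points to Theorems 3.1 and 3.11 of that reference), so there is no in-paper argument to compare against. Your sketch is, in outline, a faithful reconstruction of that classical proof: sufficiency via the inverse function theorem applied to $n$ functions of $\h$ with independent differentials, plus the identification of iterated Lie derivatives with mixed partial derivatives of the output in the switching times of piecewise-constant controls; necessity via a nonvanishing analytic vector field annihilated by $d\h$ on the locus of locally constant rank, whose flow produces indistinguishable pairs. You have also correctly isolated the one genuinely delicate step, namely that agreement of all elements of $\h$ at two points implies indistinguishability under \emph{all} admissible inputs, which for analytic systems follows from the Taylor-coefficient argument together with continuous dependence of trajectories on the control and density of piecewise-constant controls. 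Two small points to tighten: the rank of an analytic family of $1$-forms is \emph{lower} semicontinuous (a nonvanishing minor stays nonvanishing nearby), not upper semicontinuous, which is what actually makes the maximal-rank locus open, with density coming from analyticity; and in the sufficiency direction you should note explicitly that the trajectories used to extract the Lie derivatives can be confined to an arbitrary neighbourhood $V\subseteq U$ by shrinking the switching times, since the definition of $I(\xv_0,V)$ only quantifies over inputs whose trajectories remain in $V$. Neither affects the correctness of the approach.
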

\begin{remark}
Here $dim_{\R_x}(d\h)$ is the generic or maximal rank of $d\h$, that is, $dim_{\R_x}(d\h)= \max_{\xv\in M}(dim_{\R} d\h(\xv))$.
\end{remark}
For system $\Sigma$ with no control inputs, i.e. $\hv_i\equiv0,i=1,\cdots,k$ the condition for local weak observability simplifies to checking,
\begin{equation}\label{eq:obsr}
\mbox{rank}(\mathcal{O}(\xv))=n,
\end{equation}
where, $\mathcal{O}(\xv)$ is the nonlinear observability matrix (NOM),
\begin{equation}\label{eq:obsrmat}
    \mathcal{O}(\xv)=\nabla_{\xv}\begin{pmatrix}
    L^0_{\hv_0}\gv(\xv)\\
    L^1_{\hv_0}\gv(\xv)\\
    \vdots\\
    L^r_{\hv_0}\gv(\xv)
    \end{pmatrix},
\end{equation}
for some $r\in \Z$. One can use symbolic computation to check the generic rank condition (\ref{eq:obsr}) as performed by Sedoglavic's algorithm \cite{sedoglavic2001probabilistic}.

\begin{remark}\label{remrn}
In general the value of $r$  to use in (\ref{eq:obsrmat}) is not known apriori. For analytic system $\Sigma$, $r$ can be set to the state dimension $n$, see Theorem $4.1$ in \cite{obsThesis}.
\end{remark}

\begin{remark}\label{algeom}
For a polynomial system $\Sigma$, observability has also been studied from the perspective of algebraic geometry, see \cite{gerbet2020global} and references therein.
\end{remark}

% \begin{remark}
We adopt the use of local weak observability as the notion of nonlinear observability throughout the remainder of this paper.
% \end{remark}

\section{UNIFORM HYPERGRAPHS}\label{sec: uniform hypergraphs}
A \textit{undirected hypergraph} $\g=\{\Vs,\Es\}$ where $\Vs$ is a finite set and $\Es\subseteq \mathcal{P}(\Vs)\setminus \{\emptyset\}$, the power set of $\Vs$. The elements of $\Vs$ are called the nodes, and the elements of $\Es$ are called the hyperedges. A hypergraph is $k$-uniform if all hyperedges contain exactly $k$ vertices.
\subsection{Uniform Hypergraph Structure}
% We note that in this definition of hypergraph we do not allow for repeated nodes within an hyperedge (often called hyperloops).  The degree $d(v)$ of a node $v\in\Vs$ is $d(v)=|\{e\in \Es|v\in e\}|$, where $|\cdot|$ denotes set cardinality. The degree of an hyperedge $e$ is denoted by $d(e) = |e|$. For $k$-uniform hypergraphs, the degree of each hyperedge is the same, i.e. $d(e) = k$.

\begin{defn}
Let $\g= \{\Vs,\Es\}$ be a $k$-uniform hypergraph with $n=|\Vs|$ nodes. The \textit{adjacency tensor} $\tA\in\mathbb{R}^{n\times n\times \dots\times n}$ of $\g$ is a $k$-th order, $n$-dimensional, supersymmetric tensor is defined
\begin{equation}\label{eq:98}
    \tA_{j_1j_2\dots j_k} = 
    \begin{cases}
        \frac{1}{(k-1)!} & \text{if $\{j_1,j_2,\dots,j_k\}\in \Es$}\\
        0 &  \text{otherwise}\end{cases}.
\end{equation}

% Similarly to standard graphs, the \textit{degree} of vertex $j$ of a uniform hypergraph is defined as
% \begin{equation}\label{eq:r6}
% d_j=\sum_{j_2=1}^n\sum_{j_3=1}^n\dots \sum_{j_k=1}^n\tA_{jj_2j_3\dots j_k}.
% \end{equation}
\end{defn}

% Note that the choice of the nonzero coefficient $\frac{1}{(k-1)!}$ in (\ref{eq:98}) guarantees that the degree of each node is equal to the number of hyperedges that contain that node, which is consistent with the notion of degree in standard graphs. The \textit{degree distribution} of a hypergraph is the distribution of the degrees over all nodes. If all nodes have the same degree $d$, then $\g$ is called \textit{$d$-regular}.

We recall definitions of uniform hypergraph chain, ring, star and complete hypergraphs following \cite{chen2021controllability}.

\begin{defn}\label{def: hyperchain}
 A $k$-uniform \textit{hyperchain} is a sequence of $n$ nodes such that every $k$ consecutive nodes are adjacent, i.e., nodes $j,j+1,\dots,j+k-1$ are contained in one hyperedge for $j=1,2,\dots,n-k+1$.
\end{defn}

\begin{defn}
A $k$-uniform \textit{hyperring} is a sequence of $n$ nodes such that every $k$ consecutive nodes are adjacent, i.e., nodes $\sigma_n(j),\sigma_n(j+1),\dots,\sigma_n(j+k-1)$ are contained in one hyperedge for $j=1,2,\dots,n$, where $\sigma_n(j)=j$ for $j\leq n$ and $\sigma_n(j)=j-n$ for $j>n$.
\end{defn}

\begin{defn}
A $k$-uniform \textit{hyperstar} is a collection of $k-1$ internal nodes that are contained in all the hyperedges, and $n-k+1$ leaf nodes such that every leaf node is contained in one hyperedge with the internal nodes.
\end{defn}

\begin{defn}\label{def: complete hypergraph}
A $k-$uniform \textit{complete hypergraph} is a set of $n$ vertices with all $\binom{n}{k}$ possible hyperedges.
\end{defn}

See Fig. \ref{fig: toy HG} in Section \ref{sec:syntexample} for examples of these structures, and note that when $k=2$, definitions \ref{def: hyperchain} - \ref{def: complete hypergraph} are reduced to standard chains, rings, stars and complete graphs.
% In $k$-uniform hyperchains, hyperrings and hyperstars, every two hyperedges have exactly $k-1$ overlapping nodes, 

\subsection{Uniform Hypergraph Dynamics with Outputs}
We extent the homogeneous polynomial/multilinear time-invariant dynamics of a $k$-uniform hypergraph to include linear system outputs.

% Following \cite{chen2021controllability} we represent the dynamics of a uniform hypergraph by homogeneous polynomial/multilinear time-invariant differential equations with linear outputs as follows.

\begin{defn}
Given a $k$-uniform undirected hypergraph $\g$ with $n$ nodes, the dynamics of $\g$ with outputs $\yv\in \R^m$ is defined as
\begin{equation}\label{eq:hypdyn1}
    \Sigma\begin{cases}
        \dot{\xv} & = \fv(\xv)=\tA\xv^{k-1}\\
        \yv & = \gv(\xv)=\Cv\xv,        
    \end{cases}
\end{equation}
where $\tA\in\mathbb{R}^{n\times n\times \dots \times n}$ is the adjacency tensor of $\g$, and $\Cv\in\mathbb{R}^{m\times n}$ is the output matrix.
\end{defn}

See Fig. \ref{fig:1} for an example of uniform hypergraph and associated dynamics. All the interactions are characterized using multiplications instead of the additions that are typically used in a standard graph based representation. For detailed discussion on relationship between graph vs. hypergraph dynamic representation, see \cite{chen2021controllability}.

\section{OBSERVABILITY COMPUTATION FOR UNIFORM HYPERGRAPHS}\label{sec: obsv uniform hypergraphs}
In this section, we recast the homogeneous polynomial/multilinear hypergraph dynamical system in terms of the Kronecker product, derive a construction of the corresponding NOM, and propose a recursive algorithm to perform the construction.

The hypergraph dynamics (\ref{eq:hypdyn1}) can be expressed equivalently as the unfolded tensor $\tA$ contracted with the Kronecker exponentiation of the state vector:
\begin{equation}\label{eq:hypdynkron1}
    \Sigma\begin{cases}
        \dot{\xv} & = \fv(\xv) = \tA_{(p)}\xv^{[k-1]}\\
        \yv & = \gv(\xv) = \Cv\xv,
    \end{cases}
\end{equation}
% \begin{eqnarray}
% \dot{\xv}&=&\Av\xv^{[k-1]},\label{eq:hypdynkron1}\\
% \yv&=&\Cv\xv,\label{eq:hypdynkron2}
% \end{eqnarray}
where, $\tA_{(p)}\in \R^{n\times n^{k-1}}$ is the $p$-th mode unfolding of $\tA$. Since $\tA$ is a super-symmetric tensor, all $p$-th mode unfoldings give rise to the same matrix $\tA_{(p)}=\Av$.

Furthermore, $\fv$ and $\gv$ are homogeneous polynomials and hence analytic functions. Thus, one can in principle use Sedoglavic's algorithm \cite{sedoglavic2001probabilistic} for rank computation of NOM associated with above system, as performed in \cite{liu2013observability} and discussed in Section \ref{sec:obs}. Alternatively, algebraic geometric techniques for polynomial systems can also be used as indicated in the Remark \ref{algeom}. These approaches while general purpose tend to be computationally expensive. We develop a specialized framework exploiting structure of hypergraph dynamics introduced above for potentially more efficient observability computation.

\begin{figure}[t!]
\centering
\tcbox[colback=white,top=5pt,left=5pt,right=-5pt,bottom=5pt]{
\begin{tikzpicture}[scale=0.91, transform shape]
\node[vertex,text=white,scale=0.7] (v1) {1};
\node[vertex,above right = 11pt and 1.5pt of v1,text=white,scale=0.7] (v2) {2};
\node[vertex,below right = 11pt and 1.5pt of v2,text=white,scale=0.7] (v3) {3};

\node[vertex,below right = 3pt and 40pt of v2,white,scale=0.5] (v7){};
\node[vertex,below right = 3pt and 6pt of v2,white,scale=0.5] (v8){};
\node[vertex,below right = -9pt and 12pt of v2,white,scale=0.7,text=black] (v9){  $\dot{\textbf{x}}$=$\textbf{A}\textbf{x}$};
\node[rectangle,below right = -20pt and 40pt of v2,white,scale=0.7,text=black] (v6) {
$\Large
\begin{cases}
\dot{x}_1 = x_2+x_3\\
\dot{x}_2 = x_1 + x_3\\
\dot{x}_3 = x_1 + x_2
\end{cases}
$};
\node[vertex,above of=v2,node distance=20pt,white,text=black,scale=0.7] (text1) {\Large\textbf{A}};
\path [->,shorten >=1pt,shorten <=1pt, thick](v8) edge node[left] {} (v7);

\node[vertex,right of=v3,node distance=120pt,text=white,scale=0.7](w1) {1};
\node[vertex,above right = 11pt and 1.5pt of w1,text=white,scale=0.7] (w2) {2};
\node[vertex,below right = 11pt and 1.5pt of w2,text=white,scale=0.7] (w3) {3};
\node[vertex,below right = 3pt and 40pt of w2,white,scale=0.5] (w7){};
\node[vertex,below right = 3pt and 6pt of w2,white,scale=0.5] (w8){};
\node[vertex,below right = -10pt and 12pt of w2,white,scale=0.7,text=black] (w9){  $\dot{\textbf{x}}$=$\textsf{A}\textbf{x}^{2}$};

\node[rectangle,below right = -20pt and 40pt of w2,white,scale=0.7,text=black] (w6) {
$\Large
\begin{cases}
\dot{x}_1 = x_2x_3\\
\dot{x}_2 = x_1x_3\\
\dot{x}_3 = x_1x_2
\end{cases}
$};
\node[vertex,above of=w2,node distance=20pt,white,text=black,scale=0.7] (text1) {\Large\textbf{B}};

\path [->,shorten >=1pt,shorten <=1pt, thick](w8) edge node[left] {} (w7);

\begin{pgfonlayer}{background}
\draw[edge,color=orange,line width=8pt] (v1) -- (v2);
\draw[edge,color=red,line width=8pt] (v2) -- (v3);
\draw[edge,color=green,line width=8pt] (v3) -- (v1);

\draw[edge,color=orange,line width=8pt] (w1) -- (w2) -- (w3) -- (w1);
\end{pgfonlayer}

\end{tikzpicture}
}
\caption{Graphs versus uniform hypergraphs. (A) Standard graph with three nodes and edges $e_1=\{1,2\}$, $e_2=\{2,3\}$ and $e_3=\{1,3\}$, and its corresponding linear dynamics. (B) 3-uniform hypergraph with three nodes and a hyperedge $e_1=\{1,2,3\}$, and its corresponding nonlinear dynamics.}
\label{fig:1}
\end{figure}
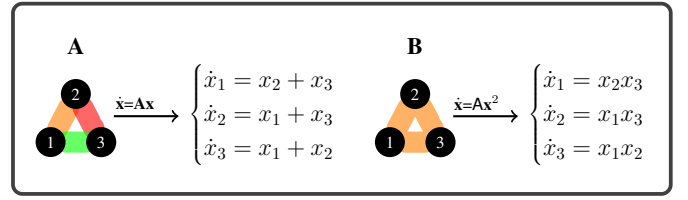

\subsection{Observability Criterion}
To determine the NOM (\ref{eq:obsrmat}) for the systems (\ref{eq:hypdyn1}) and (\ref{eq:hypdynkron1}), we compute 
the Lie derivatives of the system output along the flow of the system state:
\begin{eqnarray*}
    L_{\fv}^0\gv(\xv) & =& \Cv\xv,\\
    L_{\fv}^1\gv(\xv) & = & \frac{d}{dt}\Cv\xv=\Cv\Av\xv^{[k-1]},\\
    L_{\fv}^2\gv(\xv) & = & \frac{d}{dt}\Cv\Av\xv^{[k-1]}\\
              & = & \Cv\Av\frac{d}{dt}\bigg(\overbrace{\xv\otimes \xv\dots\otimes\xv}^{\text{$k-1$ times}}\bigg)\\
              & = & \Cv\Av\bigg(\dot{\xv}\otimes\dots\otimes \xv+\cdots+\xv\otimes\dots\otimes\dot{\xv}\bigg)\\ %\xv\otimes\dot{\xv}\otimes\dots\otimes \xv
              & = & \Cv\Av\bigg(\sum \xv\otimes\dots\otimes \Av\xv^{[k-1]}\otimes\dots\otimes \xv\bigg)\\
              & = & \Cv\Av\bigg[\bigg(\sum \Id\otimes\dots\otimes \Av\otimes\dots\otimes \Id\bigg) \xv^{2k-3}\bigg]\\
              & = & \Cv\Av\Bv_2 \xv^{2k-3},\\
    & \vdots &\\
    L_{\fv}^n\gv(\xv) & = & \Cv\Av\Bv_2\dots \Bv_n \xv^{[nk-(2n-1)]} \quad \forall n>2,
\end{eqnarray*}
where $\Bv_p$ is given by,
\begin{equation}\label{eq:B}
\Bv_p=\sum_{i=1}^{(p-1)k-(2p-3)} \overbrace{\Id\otimes\dots\otimes \underbrace{\Av}_{i\text{-th pos.}}\otimes\dots\otimes \Id}^{(p-1)k-(2p-3) \text{times}}.
\end{equation}
% which is a summation of $n$-way Lyapunov-like matrices \cite{borggaard2020quadratic}.
The NOM may then be written as
\begin{equation}\label{eq:hgobsv}
    \mathcal{O}(\xv)=\nabla_{\xv}\begin{pmatrix}
    \Cv\xv\\
    \Cv\Av\xv^{[k-1]}\\
    \Cv\Av\Bv_2\xv^{[2k-3]}\\
    \vdots\\
    \Cv\Av\Bv_2\dots \Bv_n \xv^{[nk-(2n-1)]}
    \end{pmatrix},
\end{equation}
where, we have used $r=n$ as per Remark \ref{remrn}. From Theorem \ref{thm1}, when $rank(\mathcal{O}(\xv))=\dim(\xv),$ systems (\ref{eq:hypdyn1}) and (\ref{eq:hypdynkron1}) are observable.

\begin{remark}
For the case $k=2$, the hypergraph reduces to a graph with adjacency matrix $\textsf{A}$ and linear dynamics. Then, $\mathcal{O}(\xv)$ reduces to the Kalman observability matrix, and our observability test is equivalent to the famous Kalman-rank condition.
\end{remark}

\subsection{Computational Framework}
The NOM (\ref{eq:hgobsv}) can be expressed in the form, 
\begin{equation*}\label{eq:hgobsv1}
    \mathcal{O}(\xv)=\nabla_{\xv}\begin{pmatrix}
    \Cv\Jv_0(\xv)\\
    \Cv\Jv_1(\xv)\\
    \Cv\Jv_2(\xv)\\
    \vdots\\
    \Cv\Jv_n(\xv)
    \end{pmatrix},
\end{equation*}
where, $\Jv_i(\xv)\in \R^n$ are vectors defined as,
\begin{eqnarray}
\Jv_0(\xv)&=&\xv,\notag\\
\Jv_1(\xv)&=&\Av\xv^{[k-1]},\notag
\end{eqnarray}
and
\begin{eqnarray*}
\Jv_p(\xv)&=&\Av\Bv_2\dots \Bv_p \xv^{[pk-(2p-1)]},\label{eq:J}
\end{eqnarray*}
for $p=2,\cdots,n-1$. It is computationally infeasible to construct the $\Bv_p$ matrices explicitly, so we apply the mixed product property to evaluate $\Bv_p\xv^{[pk-(2p-1)]}$ as,
\begin{eqnarray}\label{eq:mixedProduct1}
&&\Bv_p\xv^{[pk-(2p-1)]}\notag\\
&=&\sum_{i=1}^{(p-1)k-(2p-3)} \overbrace{\xv\otimes\dots\otimes \underbrace{\Av\xv^{[k-1]}}_{i\text{-th vpos.}}\otimes\dots\otimes \xv}^{(p-1)k-(2p-3) \text{times}}.
\end{eqnarray}
The mixed product property can be recursively exploited  to compute products $\Bv_2\cdots \Bv_p\xv^{[pk-(2p-1)]}$ appearing in $\Jv_p(\xv).$

From Eqn.~\ref{eq:mixedProduct1}, define sets $S_1,\dots, S_{(p-1)k-(2p-3)}$ such that
\begin{equation*}
    S_i=\bigg\{{\xv,\dots, \underbrace{\Av \xv^{[k-1]}}_{i\text{-th pos.}},\dots, \xv}^{}\bigg\},
\end{equation*}
where $|S_i|=(p-1)k-(2p-3).$ Given all sets $S_i,$ the calculation of $\Bv_{p-1}\Bv_p\xv^{[pk-(2p-1)]}$ follows a similar procedure to Eqn.~\ref{eq:mixedProduct1} to obtain the result that $\Bv_{p-1}\Bv_p\xv^{[pk-(2p-1)]}$ may be written as
\begin{multline}\label{eq: recursive mixed product}
    \sum_{i=1}^{\substack{(p-2)k\\-(2p-5)}}\ \sum_{j=1}^{\substack{(p-1)k\\-(2p-3)}}\bigg(S_{j,1}\otimes\dots
    \otimes\underbrace{\Av\big(S_{j,i}
    \otimes\dots\otimes S_{j,i+k-2}\big)}_{i\text{-th pos.}}\\
    \otimes\dots\otimes S_{j,(p-1)k-(2p-3)}\bigg),
\end{multline}
where there are $(p-2)k-(2p-5)$ vectors in $\R^n$ that are Kronecker product within each calculation of the inner sum. These $(p-2)k-(2p-5)$ vectors form elements in the recursively calculated sets $S_i$. Algorithm \ref{alg:Jp} performs the recursive calculation of $\Jv_p(\xv).$

\begin{thm}
Recursive application of the mixed product property of Kronecker products computes $
\Jv_p(\xv)$ without computing a Kronecker exponentiation larger than $\xv^{[k-1]}.$
\end{thm}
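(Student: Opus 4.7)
The plan is to induct on $p$, establishing correctness of Algorithm \ref{alg:Jp} and the Kronecker-size bound simultaneously. I would first unify the statement by showing that for an input list $S_j$ of length $pk-(2p-1)$ consisting of vectors in $\R^n$, \textbf{RecursiveJp}$(\Av,p,k,S_j)$ returns $\Av\Bv_2\cdots\Bv_p(S_{j,1}\otimes\cdots\otimes S_{j,pk-(2p-1)})$, and that along the way the only Kronecker products materialized have length at most $k-1$. Specializing to $S_j=(\xv,\ldots,\xv)$ then gives $\Jv_p(\xv)$, with the maximum Kronecker power being $\xv^{[k-1]}$ exactly when each $S_{j,\ell}=\xv$.

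For the base case $p=1$, the algorithm returns $\Av\big(S_{j,1}\otimes\cdots\otimes S_{j,k-1}\big)$, matching $\Jv_1(\xv)=\Av\xv^{[k-1]}$ when the input list is $(\xv,\ldots,\xv)$, and producing a single Kronecker product of exactly $k-1$ vectors in $\R^n$. For the inductive step, I would start from
\[
\Bv_p\,(S_{j,1}\otimes\cdots\otimes S_{j,pk-(2p-1)})
= \sum_{i=1}^{(p-1)k-(2p-3)} S_{j,1}\otimes\cdots\otimes \Av\bigl(S_{j,i}\otimes\cdots\otimes S_{j,i+k-2}\bigr)\otimes\cdots\otimes S_{j,pk-(2p-1)},
\]
which is obtained by applying the mixed-product identity (\ref{eq:kronprod}) to (\ref{eq:B}), exactly as in (\ref{eq:mixedProduct1}). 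Each summand is a Kronecker product of a new list $S_i'$ of length $(p-1)k-(2p-3)$, formed by replacing positions $i,\ldots,i+k-2$ with the single vector $\Av(S_{j,i}\otimes\cdots\otimes S_{j,i+k-2})\in\R^n$. This is precisely what line 9 of Algorithm \ref{alg:Jp} constructs. Premultiplying by $\Av\Bv_2\cdots\Bv_{p-1}$ and invoking the inductive hypothesis on each $S_i'$ yields $\sum_i \textbf{RecursiveJp}(\Av,p-1,k,S_i')$, matching line 10.

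The size bound follows by inspection of the inductive step: the only tensor products formed are the inner $\Av\bigl(S_{j,i}\otimes\cdots\otimes S_{j,i+k-2}\bigr)$, each a $(k-1)$-fold Kronecker product of vectors in $\R^n$ living in $\R^{n^{k-1}}$, which is immediately collapsed back to $\R^n$ by $\Av$. Thus every intermediate tensor lies in $\R^n$ or $\R^{n^{k-1}}$, and the recursion never materializes the naive argument $\xv^{[pk-(2p-1)]}$.

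The main obstacle I anticipate is the index bookkeeping: verifying that the list lengths $pk-(2p-1)$ at level $p$, together with the loop bound $b=(p-1)k-(2p-3)$ and the effective reduction $|S_i'|=(p-1)k-(2p-3)$, exactly match both the summation range in the expansion of $\Bv_p$ and the hypothesized input size at level $p-1$. Once these arithmetic identities are checked, the equivalence of Eqn.~(\ref{eq: recursive mixed product}) with two successive unrollings of the recursion becomes transparent, and nothing else in the proof is more than bookkeeping on top of the mixed-product property.
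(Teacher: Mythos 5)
Your proposal is correct and follows essentially the same route as the paper's proof: both rest on the observation that each recursive step only ever forms the $(k-1)$-fold products $\Av\bigl(S_{j,i}\otimes\cdots\otimes S_{j,i+k-2}\bigr)$, which live in $\R^{n^{k-1}}$ and are immediately collapsed to $\R^n$, so every element of every set $S_i'$ is a vector in $\R^n$. Your version is more complete in that it also verifies by induction that the recursion actually returns $\Av\Bv_2\cdots\Bv_p\bigl(S_{j,1}\otimes\cdots\otimes S_{j,pk-(2p-1)}\bigr)$ and checks the length bookkeeping $pk-(2p-1)\mapsto (p-1)k-(2p-3)$, which the paper's proof asserts implicitly via Eqn.~(\ref{eq: recursive mixed product}) rather than proving.
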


\begin{proof}
Recursive use of the mixed product property in computing $\Jv_p(\xv)$ generates a series of sets $S_i'$ for $i=1,\dots,(p-1)k-(2p-3)$ where every term in the sets is a vector in $\R^n.$ From Eqn.~(\ref{eq: recursive mixed product}), each term in the sets $S_i'$ is taken directly from $S_i$ or through the multiplication of $\Av$ with the Kronecker product of $k-1$ vectors in $S_i$ (Step 9 in Algorithm 1), which requires computing vectors of size at most $n^{k-1}$ with the Kronecker product.
\end{proof}

% \color{red}
Addressing the locality of the system state remains a central challenge in determining nonlinear observability. To provide a notion of global observability, Algorithm \ref{alg:Jp} is applied to symbolic state vectors such that the rank condition of the symbolic NOM is a test of observability for all states of the system. % Because we perform symbolic rather than numeric calculations, 
Since the complexity of symbolic operations is not fixed, Algorithm \ref{alg:Jp} is optimized to minimized Kronecker exponentiation rather than to reduce the number of floating point operations (FLOPs). % For this reason, although the number of FLOPS of Algorithm \ref{alg:Jp} is not minimized, the recursive structure that minimizes the maximum symbolic calculations is a better procedure. 
If we were to test for local observability at a numeric state, we would utilize a Lyapanov-like matrix solver to evaluate the NOM instead.

% While memory efficient, the time complexity of Alg. \ref{alg:Jp} is computationally expensive. The maximum number of Kronecker products computed sequentially is bounded to $k-1,$ but the total number of Kronecker products computed remains large. For instance, the first recursive call for computing $L_\fv^n\gv(\xv)$ requires computing $(n-1)(k^2-k)$ individual Kronecker products; this becomes intractable for large systems.
\color{black}

\begin{algorithm}[t]
\caption{{RecursiveJp($\Av,\ p,\ k, S_j)$}}
% \caption{A recursive algorithm for computing $\mathbf{J_p(x)}$ %\textbf{I need to double check this because the pseudo code here is different than my actual (loop based) implementation.}}
\label{alg:Jp}
\begin{algorithmic}[1]
\IF{$p=1$}
    \STATE $\Jv_p(\xv)=\Av\big(S_{j,1}\otimes\dots\otimes S_{j,(k-1)}\big)$
    \STATE \textbf{return:} $\Jv_p(\xv)$
\ENDIF
\STATE $b=(p-1)k-(2p-3)$
\STATE $\Jv_p(\xv)=0$
\FOR{$i=1,\dots,b$}
    \STATE $S_i'=\{S_{j,1},\dots,\overbrace{\Av\big(S_{j,i}\otimes\dots\otimes S_{j,i+k-2}\big)}^{i\text{th pos.}},$ $\dots, S_{j,pk-(2p-1)}\}$
    \STATE $\Jv_p(\xv) =\Jv_p(\xv)\ +$ {RecursiveJp($\Av,\ p-1, k, S_i'$)}
\ENDFOR
\STATE \textbf{return:} $\Jv_p(\xv)$
\end{algorithmic}
\end{algorithm}

\section{MINIMUM OBSERVABLE NODE SELECTION}\label{sec: MONS}
Finding the minimum set of observable nodes (MON) is a combinatorial optimization problem, and is in general intractable using brute-force search. We provide a greedy heuristic approach for estimating the MON of a uniform hypergraph in which nodes are chosen as measurements based on the maximum change in the rank of $\mathcal{O}_D(\xv)$, see Algorithm \ref{alg:2}. Here $\mathcal{O}_D(\xv)$ denotes the NOM for nodes in the index set $D=\{i_1,\cdots,i_m\}\subset S=\{1,\cdots,n\}$ as the outputs, and is given by,
\begin{equation*}\label{eq:OD}
\mathcal{O}_D(\xv)=\left(
         \begin{array}{c}
          \mathcal{O}_{i_1}(\xv) \\
          \mathcal{O}_{i_2}(\xv)  \\
          \vdots \\
          \mathcal{O}_{i_m}(\xv)  \\
         \end{array}
       \right),
\end{equation*}
where, construction of $\mathcal{O}_i(\xv),i\in D$ is as follows. Let the gradient of entries $\Jv_{p,i}(x),i=1,\cdots,n$ of vector $\Jv_{p},$ for $p=1,\cdots,n$ be denoted,
\begin{equation*}\label{eq:Jo}
\overline{\Jv}_{p,i}(\xv)=\Cv_i\Jv_{p,i}(\xv).
\end{equation*}
where the output matrix $\Cv_i\in\Z^{1\times n}$ observes only the $i$-th node i.e. $1$ at $i$-th entry and zero otherwise.
% \begin{equation}\label{eq:C}
% \Cv_i=[0 \cdots \overbrace{1}^{ith position} \cdots 0],
% \end{equation}
Then
\begin{equation*}\label{eq:O}
\mathcal{O}_i(\xv)=\nabla_{\xv} \left(
         \begin{array}{c}
          \Cv_i\Jv_0(x) \\
          \Cv_i\Jv_2(x) \\
          \vdots \\
          \Cv_i\Jv_{n-1}(x) \\
         \end{array}
       \right)= \left(
         \begin{array}{c}
          \overline{\Jv}_{0,i}(x) \\
          \overline{\Jv}_{1,i}(x) \\
          \vdots \\
          \overline{\Jv}_{n-1,i}(x) \\
         \end{array}
       \right),
\end{equation*}
is the NOM  with $i$-th node as the measurement.

% \color{red}
Algorithm \ref{alg:2} provides a greedy approach to selecting the MON of a hypergraph. The individual observation matrices are provided as input and may be computed according to Algorithm \ref{alg:Jp}. The conditional statement in line 2 implements Theorem \ref{thm1} to determine the size at which the system is observable with only the vertices contained in $D.$ Line 4 computes the greedy heuristic to maximize the rank of the NOM, and the rank calculations can be performed either numerically or symbolically. The choice to execute Algorithm \ref{alg:2} numerically versus symbolically is the dominant factor in determining the time complexity of Algorithm \ref{alg:2}. When the cost of checking the rank in line 4 is $r,$ the run time of Algorithm \ref{alg:2} is $\mathcal{O}(rn^{2}).$
\color{black}

% to compute $\mathcal{O}_D(\xv)$ for any $D\subset S.$ Step 6 computes the symbolic rank of the NOM.
% making the determination of MON computationally efficient.

% \begin{remark}
% For calculating generic matrix rank computation in Step 7, we use the Symbolic Toolbox in MATLAB. 
% \end{remark}
% \begin{remark}
%  If a uniform hypergraph is non-connected, we can first identify the connected components (which can be defined similarly as in graphs), and then apply the algorithm to each component, further simplifying the MON computation.
% \end{remark}
% \begin{remark}
%  In Step 7, if multiple $s^*$ are obtained, we can pick one randomly, or use some other conditions to break the tie, e.g., by selecting the node with the highest degree.
% \end{remark}
%It turns out that Algorithm \ref{alg:2} with high likelihood can find the MCN of a medium-sized uniform hypergraph, and it is much faster than a brute-force search, see Section \ref{sec:3.5}.

\begin{algorithm}[t]
\caption{GreedyMON($\mathcal{O}_i(\xv)$ for all $i=1,\cdots,n$)}
% \caption{Greedy MON Selection.}
\label{alg:2}
\begin{algorithmic}[1]
% \STATE{Given a $k$-th order $n$ dimensional super-symmetric tensor $\textsf{A}$}\\
% \STATE{Unfold $\textsf{A}$ to a $n\times n^{k-1}$ matrix $\Av$}\\
% \STATE{Compute $\mathcal{O}_i(\xv)$ for all $i=1,\cdots,n$}\\
\STATE{Let $S=\{1,2,\dots,n\}$ and $D=\emptyset$}\\
\WHILE{$\mbox{rank}(\mathcal{O}_D(\xv))<n$}
    \FOR{$s\in S\setminus D$}
    \STATE{Compute $\Delta(s)=\text{rank}(\mathcal{O}_{D\cup \{s\}}(\xv))-\text{rank}(\mathcal{O}_D(\xv))$}\\
    \ENDFOR
    \STATE{Set $s^{*} = \text{argmax}_{s\in S\setminus D}\Delta(s)$}\\
    \STATE{Set $D=D\cup \{s^*\}$}\\
\ENDWHILE
\RETURN The set $D$.
\end{algorithmic}
\end{algorithm}

\section{NUMERICAL RESULTS}\label{sec: numerical results}

We demonstrate the identification of MON on uniform hypergraph chain, rings, and stars as well as a  hypergraph constructed from time series data. These calculations were performed symbolically with MATLAB R2022b.

\subsection{Synthetic Uniform Hypergraphs}\label{sec:syntexample}
%We identified the MON sets required to observe hypergraph chains, rings, and stars. 
We identified the MON set for uniform hypergraph chains, rings and stars and $k=2,\dots,n$ with $n=3,\dots,7$. For the hyperstar, the size of the MON increases with $n$ and decreases with $k$. As examples, in Fig. \ref{fig: toy HG}, six hypergraphs are shown with the identified MON. Future work aims to develop a theoretical characterization of the MON for these types of hypergraphs. 
% characterize the properties of the MON set for 
% In Tables \ref{tab: hyperchain obsv sym}, \ref{tab: hyperring obsv sym}, and \ref{tab: hyperstar obsv sym}, we varied the number of vertices $n$ and order $k$ of the hyperedges and selected the MON sets. Note that for $k=2,$ the $2-$uniform hypergraph is equivalent to a graph. The size of the MON set is smaller for $k>2$ for all 3 types of hypergraphs, indicating the utility of the hypergraph observability over the linear observability framework.
% \textbf{I am rewriting this section once I finish figure 2.}

\input{figMON}

\subsection{Mouse Neuron Endomicroscopy Hypergraph}

Hypothalamus neural activity during a feeding, fasting, and refeeding experiment was monitored with endomicroscopy to generate a time series data set \cite{sweeney2021network}. Similar to \cite{chen2021controllability} and \cite{surana2022hypergraph}, we construct 3 hypergraph representations of the activity of $15$ neurons during the different phases of the experiment. First, we compute the multi-correlation of all pairs of 3 neurons, which is defined 
\begin{equation}
    \rho = (1-\det(\mathbf{R}))^{1/2},
\end{equation}
where $\mathbf{R}\in\R^{3\times 3}$ is the correlation matrix among 3 neurons \cite{wang2014measures}. When the multi-correlation $\rho$ is greater than a prescribed threshold, we define a hyperedge among the 3 vertices. Following \cite{chen2021controllability}, we used a threshold of $0.95$. % Figure \ref{fig:mouse} shows the $3-$uniform hypergraphs constructed with this procedure.

For each of the three hypergraphs, we identified the MON. Fig. \ref{fig:mouse} depicts the hypergraph structure during each phase of the feeding experiment and depicts the MON. A similar correlation, thresholding, and graph construction was performed on all 3 phases of the experiment to identify the linear MON. Across all phases of the experiment, the MON size is reduced for hypergraphs as opposed to graphs. During the fast phase of the experiment, the multi-correlation among all neurons decreases, which results in a less connected hypergraph and an increased size of the MON. Given that the number of observed nodes on the connected component is minimal, it appears that the size of the MON set is largely driven by hypergraph connectivity. While the hypergraph MON sets is greatly reduced compared to the graph MON sets during all three phases of the experiment, the size of the MON set is the same order of magnitude as the minimum control node sets identified on this data in \cite{chen2021controllability}.

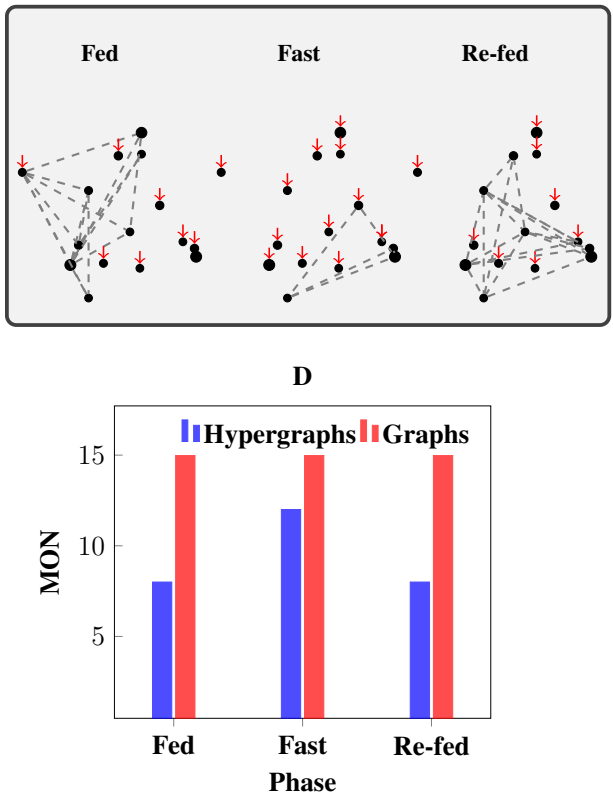
\begin{figure}[t]
    \centering
    \tcbox[colback=mygray,top=0pt,left=-1pt,right=1pt,bottom=5pt]{
    \begin{tikzpicture}[scale=0.022]
    \node[circle,inner sep=0.8] at  (60,150) {\small\textbf{Fed}};
    
    \node[circle,draw,fill=black,color=black,inner sep=4.12311/4] (v1) at  (110,36) {};
    \node[circle,draw,fill=black,color=black,inner sep=4.24264/4] (v2) at  (13,78) {};
    \node[circle,draw,fill=black,color=black,inner sep=4.47214/4] (v3) at  (96,58) {};
    \node[circle,draw,fill=black,color=black,inner sep=5.65685/4] (v4) at  (85,102) {};
    \node[circle,draw,fill=black,color=black,inner sep=5.83095/4] (v5) at  (42,22) {};
    \node[circle,draw,fill=black,color=black,inner sep=4.47214/4] (v6) at  (47,34) {};
    \node[circle,draw,fill=black,color=black,inner sep=4.24264/4] (v7) at  (53,67) {};
    \node[circle,draw,fill=black,color=black,inner sep=4.12311/4] (v8) at  (85,89) {};
    \node[circle,draw,fill=black,color=black,inner sep=4.12311/4] (v9) at  (78,42) {};
    \node[circle,draw,fill=black,color=black,inner sep=4.12311/4] (v10) at  (53,2) {};
    \node[circle,draw,fill=black,color=black,inner sep=4.24264/4] (v11) at  (62,23) {};
    % \node[circle,draw,fill=black,color=black,inner sep=3.60555/4] (v12) at  (58,79) {};
    % \node[circle,draw,fill=black,color=black,inner sep=4.47214/4] (v13) at  (52,79) {};
    \node[circle,draw,fill=black,color=black,inner sep=4.47214/4] (v14) at  (71,88) {};
    % \node[circle,draw,fill=black,color=black,inner sep=4.12311/4] (v15) at  (80,52) {};
    \node[circle,draw,fill=black,color=black,inner sep=4.47214/4] (v16) at  (117,32) {};
    % \node[circle,draw,fill=black,color=black,inner sep=4.12311/4] (v17) at  (41,117) {};
    % \node[circle,draw,fill=black,color=black,inner sep=5/4] (v18) at  (88,18) {};
    \node[circle,draw,fill=black,color=black,inner sep=5.83095/4] (v19) at  (118,27) {};
    \node[circle,draw,fill=black,color=black,inner sep=4.12311/4] (v20) at  (84,20) {};
    
    \draw [dashed, thick,gray] (v2) -- (v4);
    \draw [dashed, thick,gray] (v2) -- (v7);
    \draw [dashed, thick,gray] (v2) -- (v5);
    \draw [dashed, thick,gray] (v2) -- (v6);
    \draw [dashed, thick,gray] (v2) -- (v9);
    \draw [dashed, thick,gray] (v2) -- (v10);
    % \draw [dashed, thick,gray] (v2) -- (v13);
    % \draw [dashed, thick,gray] (v2) -- (v15);
    \draw [dashed, thick,gray] (v4) -- (v5);
    \draw [dashed, thick,gray] (v4) -- (v9);
    % \draw [dashed, thick,gray] (v4) -- (v15);
    \draw [dashed, thick,gray] (v5) -- (v6);
    \draw [dashed, thick,gray] (v5) -- (v7);
    \draw [dashed, thick,gray] (v5) -- (v8);
    \draw [dashed, thick,gray] (v5) -- (v9);
    \draw [dashed, thick,gray] (v5) -- (v10);
    % \draw [dashed, thick,gray] (v5) -- (v12);
    % \draw [dashed, thick,gray] (v5) -- (v13);
    % \draw [dashed, thick,gray] (v5) -- (v15);
    \draw [dashed, thick,gray] (v6) -- (v7);
    \draw [dashed, thick,gray] (v6) -- (v8);
    % \draw [dashed, thick,gray] (v6) -- (v12);
    % \draw [dashed, thick,gray] (v6) -- (v13);
    % \draw [dashed, thick,gray] (v6) -- (v15);
    \draw [dashed, thick,gray] (v7) -- (v10);
    % \draw [dashed, thick,gray] (v7) -- (v15);
    % \draw [dashed, thick,gray] (v9) -- (v15);
    
    \node[vertex,above of=v1,node distance=10pt,scale=0.7, white,opacity=0] (w1) {};
    \node[vertex,above of=v2,node distance=10pt,scale=0.7, white,opacity=0] (w2) {};
    \node[vertex,above of=v3,node distance=10pt,scale=0.7, white,opacity=0] (w3) {};
    \node[vertex,above of=v11,node distance=10pt,scale=0.7, white,opacity=0] (w11) {};
    \node[vertex,above of=v14,node distance=10pt,scale=0.7, white,opacity=0] (w14) {};
    \node[vertex,above of=v16,node distance=10pt,scale=0.7, white,opacity=0] (w16) {};
    % \node[vertex,above of=v17,node distance=10pt,scale=0.7, white,opacity=0] (w17) {};
    % \node[vertex,above of=v18,node distance=10pt,scale=0.7, white,opacity=0] (w18) {};
    % \node[vertex,above of=v19,node distance=10pt,scale=0.7, white,opacity=0] (w19) {};
    \node[vertex,above of=v20,node distance=10pt,scale=0.7, white,opacity=0] (w20) {};
    \node[vertex,above of=v4,node distance=10pt,scale=0.7, white,opacity=0] (w4) {};
    \node[vertex,above of=v8,node distance=10pt,scale=0.7, white,opacity=0] (w8) {};
    
    \draw [->, red, line width=0.6pt] (w2) -- (v2);
    \draw [->, red, line width=0.6pt] (w1) -- (v1);
    \draw [->, red, line width=0.6pt] (w3) -- (v3);
    \draw [->, red, line width=0.6pt] (w11) -- (v11);
    \draw [->, red, line width=0.6pt] (w14) -- (v14);
    \draw [->, red, line width=0.6pt] (w16) -- (v16);
    % \draw [->, red, line width=0.6pt] (w17) -- (v17);
    % \draw [->, red, line width=0.6pt] (w18) -- (v18);
    % \draw [->, red, line width=0.6pt] (w19) -- (v19);
    \draw [->, red, line width=0.6pt] (w20) -- (v20);
    % \draw [->, red, line width=0.6pt] (w4) -- (v4);
    % \draw [->, red, line width=0.6pt] (w8) -- (v8);

     \end{tikzpicture}
     % \hspace{0.2cm}
     \begin{tikzpicture}[scale=0.022]
     \node[circle,inner sep=0.8] at  (60,150) {\small\textbf{Fast}};
    \node[circle,draw,fill=black,color=black,inner sep=4.12311/4] (v1) at  (110,36) {};
    \node[circle,draw,fill=black,color=black,inner sep=4.24264/4] (v2) at  (13,78) {};
    \node[circle,draw,fill=black,color=black,inner sep=4.47214/4] (v3) at  (96,58) {};
    \node[circle,draw,fill=black,color=black,inner sep=5.65685/4] (v4) at  (85,102) {};
    \node[circle,draw,fill=black,color=black,inner sep=5.83095/4] (v5) at  (42,22) {};
    \node[circle,draw,fill=black,color=black,inner sep=4.47214/4] (v6) at  (47,34) {};
    \node[circle,draw,fill=black,color=black,inner sep=4.24264/4] (v7) at  (53,67) {};
    \node[circle,draw,fill=black,color=black,inner sep=4.12311/4] (v8) at  (85,89) {};
    \node[circle,draw,fill=black,color=black,inner sep=4.12311/4] (v9) at  (78,42) {};
    \node[circle,draw,fill=black,color=black,inner sep=4.12311/4] (v10) at  (53,2) {};
    \node[circle,draw,fill=black,color=black,inner sep=4.24264/4] (v11) at  (62,23) {};
    % \node[circle,draw,fill=black,color=black,inner sep=3.60555/4] (v12) at  (58,79) {};
    % \node[circle,draw,fill=black,color=black,inner sep=4.47214/4] (v13) at  (52,79) {};
    \node[circle,draw,fill=black,color=black,inner sep=4.47214/4] (v14) at  (71,88) {};
    % \node[circle,draw,fill=black,color=black,inner sep=4.12311/4] (v15) at  (80,52) {};
    \node[circle,draw,fill=black,color=black,inner sep=4.47214/4] (v16) at  (117,32) {};
    % \node[circle,draw,fill=black,color=black,inner sep=4.12311/4] (v17) at  (41,117) {};
    % \node[circle,draw,fill=black,color=black,inner sep=5/4] (v18) at  (88,18) {};
    \node[circle,draw,fill=black,color=black,inner sep=5.83095/4] (v19) at  (118,27) {};
    \node[circle,draw,fill=black,color=black,inner sep=4.12311/4] (v20) at  (84,20) {};

    \draw [dashed, thick,gray] (v3) -- (v10);
    \draw [dashed, thick,gray] (v3) -- (v19);
    \draw [dashed, thick,gray] (v10) -- (v16);
    \draw [dashed, thick,gray] (v10) -- (v19);
    \draw [dashed, thick,gray] (v16) -- (v19);

    % Observed on network
    \node[vertex,above of=v3,node distance=10pt,scale=0.7, white,opacity=0] (w3) {};
    \draw [->, red, line width=0.6pt] (w3) -- (v3);

    % Disconnected observations
    \node[vertex,above of=v1,node distance=10pt,scale=0.7, white,opacity=0] (w1) {};
    \node[vertex,above of=v2,node distance=10pt,scale=0.7, white,opacity=0] (w2) {};
    \node[vertex,above of=v4,node distance=10pt,scale=0.7, white,opacity=0] (w4) {};
    \node[vertex,above of=v5,node distance=10pt,scale=0.7, white,opacity=0] (w5) {};
    \node[vertex,above of=v6,node distance=10pt,scale=0.7, white,opacity=0] (w6) {};
    \node[vertex,above of=v7,node distance=10pt,scale=0.7, white,opacity=0] (w7) {};
    \node[vertex,above of=v8,node distance=10pt,scale=0.7, white,opacity=0] (w8) {};
    \node[vertex,above of=v9,node distance=10pt,scale=0.7, white,opacity=0] (w9) {};
    \node[vertex,above of=v11,node distance=10pt,scale=0.7, white,opacity=0] (w11) {};
    \node[vertex,above of=v14,node distance=10pt,scale=0.7, white,opacity=0] (w14) {};
    \node[vertex,above of=v19,node distance=10pt,scale=0.7, white,opacity=0] (w19) {};

    \node[vertex,above of=v1,node distance=10pt,scale=0.7, white,opacity=0] (w1) {};
    \draw [->, red, line width=0.6pt] (w1) -- (v1);
    \node[vertex,above of=v4,node distance=10pt,scale=0.7, white,opacity=0] (w4) {};
    \node[vertex,above of=v6,node distance=10pt,scale=0.7, white,opacity=0] (w6) {};
    \node[vertex,above of=v8,node distance=10pt,scale=0.7, white,opacity=0] (w8) {};
    \node[vertex,above of=v11,node distance=10pt,scale=0.7, white,opacity=0] (w11) {};
    % \node[vertex,above of=v13,node distance=10pt,scale=0.7, white,opacity=0] (w13) {};
    \node[vertex,above of=v20,node distance=10pt,scale=0.7, white,opacity=0] (w20) {};
    \node[vertex,above of=v5,node distance=10pt,scale=0.7, white,opacity=0] (w5) {};
    \node[vertex,above of=v10,node distance=10pt,scale=0.7, white,opacity=0] (w10) {};

    \draw [->, red, line width=0.6pt] (w1) -- (v1);
    \draw [->, red, line width=0.6pt] (w2) -- (v2);
    \draw [->, red, line width=0.6pt] (w4) -- (v4);
    \draw [->, red, line width=0.6pt] (w5) -- (v5);
    \draw [->, red, line width=0.6pt] (w6) -- (v6);
    \draw [->, red, line width=0.6pt] (w7) -- (v7);
    \draw [->, red, line width=0.6pt] (w8) -- (v8);
    \draw [->, red, line width=0.6pt] (w9) -- (v9);
    \draw [->, red, line width=0.6pt] (w11) -- (v11);
    \draw [->, red, line width=0.6pt] (w14) -- (v14);
    \draw [->, red, line width=0.6pt] (w20) -- (v20);
    
    \end{tikzpicture}
    % \hspace{0.2cm}
    \begin{tikzpicture}[scale=0.022]
    \node[circle,inner sep=0.8] at  (60,150) {\small\textbf{Re-fed}};
    \node[circle,draw,fill=black,color=black,inner sep=4.12311/4] (v1) at  (110,36) {};
    \node[circle,draw,fill=black,color=black,inner sep=4.24264/4] (v2) at  (13,78) {};
    \node[circle,draw,fill=black,color=black,inner sep=4.47214/4] (v3) at  (96,58) {};
    \node[circle,draw,fill=black,color=black,inner sep=5.65685/4] (v4) at  (85,102) {};
    \node[circle,draw,fill=black,color=black,inner sep=5.83095/4] (v5) at  (42,22) {};
    \node[circle,draw,fill=black,color=black,inner sep=4.47214/4] (v6) at  (47,34) {};
    \node[circle,draw,fill=black,color=black,inner sep=4.24264/4] (v7) at  (53,67) {};
    \node[circle,draw,fill=black,color=black,inner sep=4.12311/4] (v8) at  (85,89) {};
    \node[circle,draw,fill=black,color=black,inner sep=4.12311/4] (v9) at  (78,42) {};
    \node[circle,draw,fill=black,color=black,inner sep=4.12311/4] (v10) at  (53,2) {};
    \node[circle,draw,fill=black,color=black,inner sep=4.24264/4] (v11) at  (62,23) {};
    % \node[circle,draw,fill=black,color=black,inner sep=3.60555/4] (v12) at  (58,79) {};
    % \node[circle,draw,fill=black,color=black,inner sep=4.47214/4] (v13) at  (52,79) {};
    \node[circle,draw,fill=black,color=black,inner sep=4.47214/4] (v14) at  (71,88) {};
    % \node[circle,draw,fill=black,color=black,inner sep=4.12311/4] (v15) at  (80,52) {};
    \node[circle,draw,fill=black,color=black,inner sep=4.47214/4] (v16) at  (117,32) {};
    % \node[circle,draw,fill=black,color=black,inner sep=4.12311/4] (v17) at  (41,117) {};
    % \node[circle,draw,fill=black,color=black,inner sep=5/4] (v18) at  (88,18) {};
    \node[circle,draw,fill=black,color=black,inner sep=5.83095/4] (v19) at  (118,27) {};
    \node[circle,draw,fill=black,color=black,inner sep=4.12311/4] (v20) at  (84,20) {};

    \node[vertex,above of=v1,node distance=10pt,scale=0.7, white,opacity=0] (w1) {};
    \draw [->, red, line width=0.6pt] (w1) -- (v1);
    \node[vertex,above of=v4,node distance=10pt,scale=0.7, white,opacity=0] (w4) {};
    \node[vertex,above of=v6,node distance=10pt,scale=0.7, white,opacity=0] (w6) {};
    \node[vertex,above of=v8,node distance=10pt,scale=0.7, white,opacity=0] (w8) {};
    \node[vertex,above of=v11,node distance=10pt,scale=0.7, white,opacity=0] (w11) {};
    % \node[vertex,above of=v13,node distance=10pt,scale=0.7, white,opacity=0] (w13) {};
    \node[vertex,above of=v20,node distance=10pt,scale=0.7, white,opacity=0] (w20) {};
    \node[vertex,above of=v5,node distance=10pt,scale=0.7, white,opacity=0] (w5) {};
    \node[vertex,above of=v10,node distance=10pt,scale=0.7, white,opacity=0] (w10) {};

    \draw [->, red, line width=0.6pt] (w4) -- (v4);
    \draw [->, red, line width=0.6pt] (w2) -- (v2);
    \draw [->, red, line width=0.6pt] (w6) -- (v6);
    \draw [->, red, line width=0.6pt] (w8) -- (v8);
    \draw [->, red, line width=0.6pt] (w11) -- (v11);
    \draw [->, red, line width=0.6pt] (w20) -- (v20);
    % \draw [->, red, line width=0.6pt] (w5) -- (v5);
    
    \node[vertex,above of=v3,node distance=10pt,scale=0.7, white,opacity=0] (w3) {};
    \draw [->, red, line width=0.6pt] (w3) -- (v3);
    
    % 1 5 7 9 10 14 16 19
    
    \draw [dashed, thick,gray] (v1) -- (v5);
    \draw [dashed, thick,gray] (v1) -- (v7);
    \draw [dashed, thick,gray] (v1) -- (v9);
    \draw [dashed, thick,gray] (v5) -- (v7);
    \draw [dashed, thick,gray] (v5) -- (v9);
    \draw [dashed, thick,gray] (v5) -- (v10);
    \draw [dashed, thick,gray] (v5) -- (v16);
    \draw [dashed, thick,gray] (v7) -- (v9);
    \draw [dashed, thick,gray] (v7) -- (v10);
    \draw [dashed, thick,gray] (v7) -- (v14);
    \draw [dashed, thick,gray] (v7) -- (v16);
    \draw [dashed, thick,gray] (v7) -- (v19);
    \draw [dashed, thick,gray] (v9) -- (v10);    
    \draw [dashed, thick,gray] (v9) -- (v16); 
    \draw [dashed, thick,gray] (v10) -- (v14); 
    \draw [dashed, thick,gray] (v9) -- (v19); 
    \draw [dashed, thick,gray] (v9) -- (v14); 
    \draw [dashed, thick,gray] (v10) -- (v19); 
    \end{tikzpicture}
    }
     
    \vspace{0.2cm}
    \begin{tikzpicture}[scale=0.73][font=\Large]
    \begin{axis}[ybar, enlargelimits=0.23, ymax =15, ymin=3.2,
    ylabel={\textbf{MON}}, 
    symbolic x coords={A,B,C},
    xtick=data, ylabel near ticks, xticklabels={\textbf{Fed},\textbf{Fast},\textbf{Re-fed}}, title=\textbf{D}, xlabel=\textbf{Phase}, legend pos = north east,legend style={draw=none}, xtick pos=left,ytick pos=left,legend columns=-1,
    ]
\addplot+[blue, blue, fill=blue, opacity=.7] coordinates {(A,8) (B,12) (C,8)};
\addlegendentry{\textbf{Hypergraphs}}

\addplot+[orange, red, fill=red, opacity=.7] coordinates {(A,15) (B,15) (C,15)};
\addlegendentry{\textbf{Graphs}}

\end{axis}
\end{tikzpicture}
\hspace{1cm}
    \caption{(Above) Mouse neuron endomicroscopy features. Neuronal activity networks of the three phases - fed, fast and re-fed, which depicts the spatial location and size of individual cells. Each 2-simplex (i.e., a triangle) represents a hyperedge, and red arrows indicate nodes selected in MON. (Below) MON for the neuronal activity networks modelled by 3-uniform hypergraphs and standard graphs.}
    \label{fig:mouse}
\end{figure} 

% \textbf{Add brief description of data/construction of hypergraph and graph as done in hypergraph controllability paper, and create a figure analogous to Figure 6 in hypergraph controllability paper comparing graph based MON and hypergraph based MON}

%  \begin{figure}[thpb]
%      \centering
%      %\includegraphics[scale=1.0]{figurefile}
%      \caption{Inductance of oscillation winding on amorphous
%       magnetic core versus DC bias magnetic field}
%      \label{figurelabel}
%   \end{figure}

%%%%%%%%%%%%%%%%%%%%%%%%%%%%%%%%%%%%%%%%%%%%%%%%%%%%%%%%%%%%%%%%%%%%%%%%%%%%%%%%
\section{CONCLUSION}\label{sec: conclusion}
In this paper, we proposed a framework to study observability for uniform hypergraphs. We defined a canonical multilinear dynamical system with linear outputs using uniform hypergraph adjacency tensor leading to a homogeneous polynomial system. We derived the NOM for assessing the local weak observability of this resulting system.  We also proposed a recursive technique for efficient computation of the NOM, and a greedy heuristic to determine the MON. We demonstrated our approach numerically on different hypergraph topologies, and hypergraphs derived from an experimental mouse endomicroscopy dataset.

In the future, we plan to perform theoretical analysis for determining MON for different hypergraph topologies and exploring the role of symmetry, and to extend the proposed framework for non-uniform and directed hypergraphs. We also hope to further improve efficiency of the observability computations to scale to large hypergraphs which often arise in practise.

%%%%%%%%%%%%%%%%%%%%%%%%%%%%%%%%%%%%%%%%%%%%%%%%%%%%%%%%%%%%%%%%%%%%%%%%%%%%%%%%
% \section*{CODE AND DATA AVAILABILITY}
% All code and data associated with this paper is available at \url{https://github.com/Jpickard1/observability-of-hypergraphs}

%%%%%%%%%%%%%%%%%%%%%%%%%%%%%%%%%%%%%%%%%%%%%%%%%%%%%%%%%%%%%%%%%%%%%%%%%%%%%%%%
\section*{ACKNOWLEDGMENTS}
This material is based upon work supported by the Air Force Office of Scientific Research under award numbers FA9550-22-1-0215 and FA9550-23-1-0400,
by NSF grant DMS-2103026, a MathWorks Fellowship to the Rajapakse Lab (IR), and National Institute of General Medical Sciences under award number GM150581 (JP). Any opinions, finding, and conclusions or recommendations expressed in this material are those of the author(s) and do not necessarily reflect the views of the United States Air Force.

\bibliographystyle{IEEEtran}
\bibliography{hypergraph}

\end{document}